\def\dom{{\rm Dom}}
\def\={=\!\!}
\newcommand*\dep{{=\mkern-1.2mu}}
\newcommand*\depp[1]{{=\mkern-1.2mu}_{#1}\mkern-1.4mu}
\newcommand{\psfrag}[2]{}
\newtheorem{theorem}{Theorem}
\newtheorem{definition}[theorem]{Definition}
\newtheorem{lemma}[theorem]{Lemma}
\newcommand*{\textoverline}[1]{$\overline{\hbox{#1}}\m@th$}
\title{The Logic of Approximate Dependence}
\author{Jouko V\"a\"an\"anen
\thanks{This paper was written while the author was visiting the Computer Science Department of the
University of California, Santa Cruz. The author is grateful to his host Phokion Kolaitis for the invitation and for the hospitality. The author is grateful for helpful discussions on this topic with P. Galliani, L. Hella,  and P. Kolaitis. The author also thanks J. Kivinen, Jixue Liu, H. Toivonen and M. Warmuth  for helpful suggestions. Research partially supported by
grant 40734 of the Academy of Finland.}\\ 
Department of Mathematics and Statistics\\ University of Helsinki, Finland\\ and \\
Institute for Logic, Language and Computation \\ University of Amsterdam, The Netherlands
}
\begin{document}
\maketitle
\def\vx{\vec{x}}
\def\vxp{\vec{x}\hspace{1pt}'}
\def\vyp{\vec{y}\hspace{1pt}'}
\def\vxpp{\vec{x}\hspace{1pt}''}
\def\vypp{\vec{y}\hspace{1pt}''}
\def\vzp{\vec{z}\hspace{1pt}'}
\def\vzpp{\vec{z}\hspace{1pt}''}
\def\vy{\vec{y}}
\def\vz{\vec{z}}
\def\vu{\vec{u}}
\def\vv{\vec{v}}
\def\vw{\vec{w}}
\def\boto{\ \bot\ }
\def\botop{\ \bot'\ }
\newcommand{\idep}[3]{#1\ \bot_{#2}\ #3}
\newcommand{\idepb}[2]{#1\ \bot\ #2}
\def\yxz{\idep{\vy}{\vx}{\vz}}
\def\xzy{\idep{\vx}{\vz}{\vy}}
\def\xzx{\idep{\vx}{\vz}{\vx}}
\def\xyx{\idep{\vx}{\vy}{\vx}}
\def\xzu{\idep{\vx}{\vz}{\vu}}
\def\uzy{\idep{\vu}{\vz}{\vy}}
\def\yzy{\idep{\vy}{\vz}{\vy}}
\def\yxy{\idep{\vy}{\vx}{\vy}}
\def\xyu{\idep{\vx}{\vy}{\vu}}
\def\xy{\idep{\vx}{\vy}}
\def\dyx{\dep(\vy,\vx)}

\begin{abstract}
In my joint paper \cite{MR2132917} with Rohit Parikh we investigate a logic arising from finite information. Here we consider  another kind of limited information, namely information with a small number of errors, and prove a related completeness theorem. We point out that this approach naturally leads to considering multi-teams in the team semantics that lies behind \cite{MR2132917}.
\end{abstract}

\def\mm{\mathfrak{M}}


The idea of {\em finite information logic} [1] is that when quantifiers, especialy the existential quantifiers, express choices in social context, the choices are based on {\em finite} information about the parameters present. In this paper we consider a different kind of restriction. We do not restrict the information available but we allow a small number of errors. In {\em social sofware} a few errors can perhaps be allowed, especially if there is an agreement about it. 

Consider the sentences
\smallskip

``On these flights I have an exit-row seat, apart from a few exceptions.'',
\smallskip

``Apart from a few, the participants are logicians.'',
\smallskip

\noindent One way to handle such expressions is the introduction of generalized quantifiers, such as ``few $x$'',
``most $x$'', ``all $x$ but a few'', etc.
The approach of this paper is different, or at least on the surface it looks different. We use the {\em team semantics} of [2].

In team semantics the main tool for defining the meaning of formulas is not that of an assignment but that of a {\em set} of assignments. Such sets are called in [2] teams. Intuitively a team can represent (or manifest) different kinds of things, such as

\begin{center}
 
\begin{tabular}{l}
 uncertainty\\
belief\\
plays of a game\\
data about a scientific experiment\\
possible voting profiles\\
database\\
dependence\\
independence \\
inclusion\\
exclusion\\
etc. 
\end{tabular}
\end{center}

The obvious advantage of considering meaning in terms of teams rather than single assignments is that teams can indeed manifest change and variation unlike static single assignments. For example, Table 1 can tells us e.g. that $y=x^2$ apart from one exception, and $z$ is a constant zero apart from one exception. Likewise, Table 2 tells us that an employee's salary depends only on the department except for one person. In medical data about causes and effects of treatment there can often be exceptions although there may be compelling evidence of a causal relationship otherwise.

\begin{table}
\centering
\begin{tabular}{l|c|r}
$x$&$y$&$z$\\ \hline
2&4&0\\
5&25&0\\ 
3&9&1\\
2&3&0\\ 
\end{tabular}

\medskip
Table 1
\end{table}

\begin{table}
\centering
\begin{tabular}{l|c|r}
Employee&Department&Salary\\ \hline
John&I&120 000\\
Mary&II&130 000\\ 
Ann&I&120 000\\
Paul&I&120 000\\
Matt&II&130 000\\
Julia&I&130 000\\
\end{tabular}

\medskip
Table 2
\end{table}

While team semantics is suitable for many purposes, we focus here on the concept of dependence, the main concept of the paper [1], too. Dependence is used throughout science and humanities. In particular it appears in database theory in the form of functional dependence.


In \cite{MR2351449} the following concept was introduced:

\begin{definition}
 A {\em team} is any set of assignments for a fixed set of variables.
 A team $X$ is said to satisfy the dependence atom 
\begin{equation}\label{1}
 \dep(x, y),
\end{equation}
where $x$ and $y$ are finite sequences of variables, if any two assignments $s$ and $s'$ in $X$ satisfy
\begin{equation}
s(x)=s'(x) \rightarrow s(y)=s'(y).
\end{equation}
\end{definition}

Dependence logic ([2]) arises from first order logic by the addition of the dependence atoms (\ref{1}). The logical operations $\neg$, $\land$, $\vee$, $\forall$, $\exists$ are defined in such a way that dependence logic is a conservative extension of classical first order logic. The exact expressive power of dependence logic is existential second order logic. 

With the purpose in mind to capture a concept of dependence logic which is more realistic in the sense that a couple of errors are allowed, we now define\footnote{An essentially same, as well as related approximate functional dependences, were introduced already in \cite{Kivinen1995129}.}:

\begin{definition}
  Suppose $p$ is a real number, $0\leq p \leq 1$. A finite team $X$ is said to satisfy the {\em approximate dependence atom}

$$\depp{p}(x, y)$$

\noindent if there is $Y \subseteq X$, $|Y|\leq p \cdot |X|$, such that the team $X\setminus Y$ satisfies $\dep(x, y)$. We then write $X\models\depp{p}(x, y)$. For arbitrary teams (finite or infinite) $X$ we say that $X$ satisfies the atom $\dep(x, y)$ {\em mod finite}, if there is a finite $Y$ such that $X\setminus{Y}$ satisfies $\dep(x, y)$. In symbols $X\models \dep^*(x, y)$.
\end{definition}

In other words, a finite team of size $n$ satisfies $\depp{p}(x,y)$ if it satisfies $\dep(x,y)$ after we delete a portion measured by the number $p$, of the assignments of $X$. More exactly, we delete up to  $p\cdot n$ assignments from the team. Hence the word ``approximate".

The emphasis in approximate dependence $\depp{p}(x, y)$ is on small $p$ but the general concept is defined for all $p$. It is difficult to imagine any practical importance for, say, $\depp{.95}(x, y)$. This is the proposition that the team has a 5\% portion where $y$ is functionally determined by $x$. However, if we suppose that the relationship between $x$ and $y$ is totally random, then it may be significant in a big dataset of millions of rows, to observe that the 
$\depp{.95}(x, y)$ holds and violates total randomness.

For a trivial example, every finite team satisfies $\depp{1}(x, y)$, because the empty team always satisfies $\dep(x, y)$. On the other hand, $\depp{0}(x, y)$ is just the old $\dep(x, y)$. Since singleton teams always satisfy $\dep(x,y)$, a team of size $n$ always satisfies $\depp{1-\frac{1}{n}}(x,y)$.
 A finite team trivially satisfies $\dep^*(x, y)$, whatever $x$ and $y$, so the ``mod finite'' dependence is only interesting in infinite teams. 

The team of Table 1 satisfies $\depp{\frac{1}{4}}(x, y)$ and the team of Table 2 satisfies 

$$\depp{\frac{1}{6}}({\tt Department}, {\tt Salary}).$$

We claim that approximate dependence $\depp{p}(x, y)$ is a much more common phenomenon in science and humanities than full dependence $\dep(x, y)$. Any data\-base of a significant size contains errors for merely human reasons or for reasons of errors in transmission. Any statistical data of medical, biological, social, etc information has exceptions partly because of the nature of the data. One rarely if ever encounters absolute dependence of the kind $\dep(x, y)$ in practical examples. The dependencies we encounter in practical applications have exceptions, the bigger the data the more there are exceptions. For the dependence $\depp{.1}(x, y)$ we allow an error value in 10\% of the cases. This may be unacceptable for some applications but overwhelming evidence of functional dependence in others.

A different kind of approximate functional dependence arises if we think of the individual {\em values} of variables as being slightly off. For example, we can consider a functional dependence in which values of $y$ are {\em almost} the same whenever the values of $x$ are almost the same. This direction is pursued in \cite{brvv}.

We have emphasized the relevance of $\depp{p}(x, y)$ over and above $\dep(x, y)$. So how does dependence logic change if we allow $\depp{p}(x, y)$ in addition to $\dep(x, y)$, that is, if we allow dependance with errors in addition to dependence without errors?

One of the first results about database dependencies is the so called Armstrong Completeness Theorem [3]. It has as its starting point a set of axioms for the dependance $\dep(x, y)$. We now adapt the axioms from [3] to the more general case of approximate dependence atoms. Concatenation of two finite sequences of variables, $x$ and $y$, is denoted $xy$. Such finite sequences can be also empty.

\begin{definition}\label{2}
The {\em axioms} of  approximate dependence are:
\begin{description}
\item[A1] $\depp{0}(xy,x)$ (Reflexivity)
\item[A2] $\depp{1}(x,y)$ (Totality)
\end{description}
The {\em rules} of  approximate dependence are:
\begin{description}
\item[A3] If $\depp{p}(x,yv)$, then $\depp{p}(xu,y)$ (Weakening)
\item[A4] If $\depp{p}(x,y)$, then $\depp{p}(xu,yu)$ (Augmentation)
\item[A5] If $\depp{p}(xu,yv)$, then $\depp{p}(ux,yv)$ and $\depp{p}(xu,vy)$ (Permutation)
\item[A6] If $\depp{p}(x,y)$ and $\depp{q}(y,v)$, where $p+q\le 1$, then $\depp{p+q}(x,v)$  (Transitivity)
\item[A7] If $\depp{p}(x,y)$ and $p\le q\le 1$, then $\depp{q}(x,y)$ (Monotonicity)
\end{description}
 
\end{definition}

These axioms are always satisfied in finite teams. As to (A1), we observe that the empty team $\emptyset$ satisfies $\dep(x, y)$ and hence we can take ${Y}=X$ in Definition~\ref{2}. As to (A2) we observe that every team satisfies $\dep(x, y)$ and so we can take ${Y}=\emptyset$ in Definition~\ref{2}. The axiom (A3) can be verified as follows. Suppose $X\setminus{Y} \models \dep(x, yz)$, where ${Y}\leq p\cdot X$, and the domain of $X$ (and of ${Y}$) includes $xuyz$ so that both $\dep(x, yz)$ and $\dep(xu, y)$ can be meaningfully checked for satisfiability in $X$. Suppose $s$, $s'\in X$\textbackslash${Y}$ such that $s(xu)=s'(xu)$. Then $s(x)=s'(x)$. 
Hence $s(yz)=s'(yz)$, whence finally $s(y)=s'(y)$. Let us then verify the validity of (A6). Suppose $X\setminus{Y} \models \dep(x, y)$, $X\setminus{Z} \models \dep(y, z)$,     where $|{Y}|\leq p\cdot|X|$ and $|Z|\leq q\cdot|X|$. Then $|{Y}\cup Z|\leq|{Y}|+|Z|\leq (p+q)\cdot |X|$ and $X\setminus ({Y}\cup Z)\models \dep(x, z)$. Finally, (A7) is trivial.

The above axioms and rules are designed with finite derivations in mind. With infinitely many numbers $p$ we can have infinitary logical consequences (in finite teams), such as 
$$\mbox{$\{\depp{\frac{1}{n}}(x,y) : n=1,2,\ldots\}\models\depp{0}(x,y)$},$$ which do not follow by the axioms and rules (A1)-(A6)\footnote{We can use this example to encode the Halting Problem to the question whether a recursive set of approximate dependence atoms logically implies a given approximate dependence atom.}. We now focus on finite derivations and finite sets of approximate dependences. We prove the following Completeness Theorem\footnote{Proposition A.3 of \cite{Kivinen1995129} is a kind of completeness theorem in the spirit the below theorem for one-step derivations involving approximate dependence atoms.}:

\begin{theorem}\label{main}
Suppose $\Sigma$ is a finite set of approximate dependence atoms. Then $\depp{p}{(x,y)}$ follows from $\Sigma$ by the above axioms and rules if and only if every finite team satisfying $\Sigma$ also satisfies $\depp{p}(x,y)$. 
\end{theorem}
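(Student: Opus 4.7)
The plan is to prove the two directions separately. For soundness (``only if''), I would verify that each axiom and rule A1--A7 preserves satisfaction in finite teams; the cases A1, A2, A3, A6 are treated in the text, and A4, A5, A7 are similar or immediate---A4 uses the fact that the same witness set $Y$ for $\dep(x,y)$ also certifies $\dep(xu, yu)$ on $X\setminus Y$.

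For completeness (``if''), assume $\Sigma\not\vdash\depp{p}(x,y)$; the task is to construct a finite team $X$ with $X\models\Sigma$ but $X\not\models\depp{p}(x,y)$. I would first show that the minimum $q^{*}=\min\{q\in[0,1]:\Sigma\vdash\depp{q}(x,y)\}$ is attained. By A2 the set is nonempty; by A7 it is upward-closed in $[0,1]$, so of the form $[q^{*},1]$. Moreover A1--A5 and A7 do not alter the error parameter, while A6 only forms finite sums of the finite set of error values occurring in $\Sigma$ together with $0$; thus $q^{*}$ is one such finite sum and lies in the derivable set. Since $\Sigma\not\vdash\depp{p}(x,y)$ we get $q^{*}>p$. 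Let $V$ be the finite set of variables appearing in $\Sigma$ and in $(x,y)$, and define $\pi(u,v)=\min\{q:\Sigma\vdash\depp{q}(u,v)\}$ for tuples over $V$. Using A4, A5, and A6 one obtains the subadditivity $\pi(u,ab)\le\pi(u,a)+\pi(u,b)$ (when the sum is at most $1$) and, from A6 directly, the triangle inequality $\pi(u,w)\le\pi(u,v)+\pi(v,w)$.

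The construction of $X$ is Armstrong-style. Pick a positive integer $N$ that clears the denominators of $p$ and of all $p_i$ in $\Sigma$. Build a team of $N$ assignments, all agreeing on $x$. For each $a\in V$ assign its values over the $N$ rows by a block pattern: a ``majority'' block of size $(1-\pi(x,a))N$ carries one distinguished value, while the remaining $\pi(x,a)N$ ``error rows'' for $a$ carry separating values. Crucially, the error-row sets must be chosen coherently along witness derivations: when $\pi(u,a)$ is realised by a transitivity chain $u \to v \to a$ via A6, the error rows for $a$ should be arranged as a disjoint union of the error rows along that chain, so that deleting those rows simultaneously enforces every atom on the chain. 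One then verifies that (i) for each $\depp{p_i}(u_i,v_i)\in\Sigma$, deleting at most $p_i N$ rows yields $\dep(u_i,v_i)$, so $X\models\Sigma$; and (ii) any $D\subseteq X$ with $|D|\le pN$ leaves a pair $s,s'\in X\setminus D$ with $s(x)=s'(x)$ but $s(y)\ne s'(y)$, so $X\not\models\depp{p}(x,y)$. The main obstacle is (ii): ruling out any combinatorial shortcut below $q^{*}N$ deletions. The additive transitivity of A6 is essential here, since any such shortcut would amount to a derivation of $\depp{q'}(x,y)$ with $q'<q^{*}$, contradicting minimality. Formalising this step likely requires an induction on the canonical derivation tree witnessing $q^{*}$, together with a careful disjointness/overlap analysis of the error-row blocks.
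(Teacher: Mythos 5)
Your overall architecture matches the paper's: soundness by checking each rule on finite teams, and completeness by building a single finite Armstrong-style team whose columns encode the minimal derivable error $\pi(x,a)$ of each variable $a$ relative to the antecedent $x$ of the goal atom. The paper calls this quantity $d_\tau(a)$, and it likewise shows the minimum is attained because the closure of $\Sigma$ under A1--A6 over the finitely many relevant variables is finite; so that part of your plan is sound.

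There is, however, a genuine gap precisely where you say the main obstacle lies, and it is twofold. First, your proposed verification of (ii) --- ``any combinatorial shortcut would amount to a derivation of $\depp{q'}(x,y)$ with $q'<q^{*}$'' --- is not an argument: a deletion set in one particular team does not translate into a formal derivation, so you cannot appeal to minimality of $q^{*}$; you must instead build the team so that the minimal number of deletions repairing $\dep(x,y)$ is directly computable and exceeds $pN$. Second, choosing error rows ``coherently along witness derivations'' cannot be organized derivation by derivation: the values $\pi(x,a)$ may be witnessed by many incomparable chains, and a single atom of $\Sigma$ can lie on several of them, so a disjointness/overlap analysis per derivation tree does not obviously terminate in a consistent assignment. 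The paper resolves both points at once with a \emph{nested staircase}: take rows $s_0,\dots,s_{n-1}$ with $n$ so large that $1/n$ (indeed $2/n$) separates any two distinct values among $p$ and the $d_\tau(a)$, and let each variable $a$ take the pairwise distinct values $0,1,\dots,m_a$ on the first rows and the constant value $m_a$ thereafter, where $m_a/n\le d_\tau(a)<(m_a+1)/n$. The error-row sets are then initial segments, hence linearly ordered by inclusion, and the exact deletion cost of $\dep(u,v)$ in this team is $\max(0,\,m_v-m_u)$. Satisfaction of every derivable $\depp{q}(u,v)$ follows from the single inequality $d_\tau(v)-d_\tau(u)\le q$ (one application of transitivity plus minimality of $d_\tau(v)$ --- no induction on derivation trees is needed), and failure of $\depp{p}(x,y)$ follows because $d_\tau(y)>p$ forces $m_y>pn$ deletions. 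Until you specify your error-row blocks with some such global coherence property and compute the resulting deletion costs, step (ii) of your proof is not established.
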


We first develop some auxiliary concepts and observations for the proof.
 
Let $\tau$ be a pair $(\Sigma,\depp{p}(x,y))$, where  $\Sigma$ is a finite set of approximate dependencies. For such $\tau$ let $Z_\tau$ be the finite set of all variables in $\Sigma\cup\{\depp{p}(x,y)\}$. 
Let $C_\tau$ be the smallest set containing $\Sigma$ and closed under the rules $(A1)-(A6)$ (but not necessarily under (A7)) for variables in $Z_\tau$. 
Note that $C_\tau$ is finite.

\begin{lemma}\label{3}
$\Sigma\vdash\depp{t}(u,v)$ iff $\exists r\le t(\depp{r}(u,v)\in C_\tau)$. 
\end{lemma}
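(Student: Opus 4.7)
The plan is to prove both directions. The reverse direction is immediate: if $\depp{r}(u,v) \in C_\tau$ with $r \le t$, then by the definition of $C_\tau$ as the closure of $\Sigma$ under (A1)--(A6), there is a derivation $\Sigma \vdash \depp{r}(u,v)$, and one application of monotonicity (A7) yields $\Sigma \vdash \depp{t}(u,v)$. The substance of the lemma lies in the forward direction, and my plan there is to take an arbitrary derivation of $\depp{t}(u,v)$ from $\Sigma$ using (A1)--(A7) and normalize it so that (A7) appears at most once, as the final step, and so that every formula in the proof involves only variables from $Z_\tau$. The index $r$ produced by the (A7)-free prefix will then satisfy $r \le t$, and the prefix itself witnesses $\depp{r}(u,v) \in C_\tau$.

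The first normalization pushes all applications of (A7) to the end. Against (A3), (A4) and (A5) the commutation is trivial since none of those rules touches the index. Against (A6) the only delicate point is the admissibility constraint $p + q \le 1$: if the proof first raises $\depp{p}(x,y)$ to $\depp{p'}(x,y)$ via (A7) with $p \le p'$ and then combines with $\depp{q}(y,v)$ through (A6) to reach $\depp{p'+q}(x,v)$, one can instead apply (A6) first, since $p+q \le p'+q \le 1$, producing $\depp{p+q}(x,v)$, and then recover $\depp{p'+q}(x,v)$ by (A7). The symmetric case in which (A7) acts on the right premise, and the case in which both premises have been lifted, are handled the same way. A straightforward induction on the number of (A7)-applications then collapses them all into one final step.

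The second normalization removes variables outside $Z_\tau$ from the (A7)-free prefix. If $w \notin Z_\tau$ occurs anywhere in that prefix, I substitute the empty sequence for $w$ throughout. The axiom schemas (A1), (A2) and the rules (A3)--(A6) all allow empty sequences in every slot, so each line of the proof remains a valid instance after the substitution; the leaves of the derivation, being members of $\Sigma$, do not mention $w$ and are unaffected, and neither is the conclusion $\depp{r}(u,v)$, since $u$ and $v$ consist of variables in $Z_\tau$. Iterating over each stray variable produces a derivation of $\depp{r}(u,v)$ using only (A1)--(A6) inside $Z_\tau$, placing $\depp{r}(u,v)$ in $C_\tau$. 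The step I expect to require the most care is the (A6)/(A7) commutation, where one must verify the admissibility of the summation constraint at each swap, though this is ultimately just the inequality $p + q \le p' + q'$; the variable-elimination step looks more suspect than it is, since the tolerance of the axiom schemas for empty slots makes the substitution harmless.
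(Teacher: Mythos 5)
Your argument is correct, but it takes a genuinely different route from the paper's. The paper disposes of the forward direction by a closure argument: it sets $\Sigma'=\{\depp{t}(u,v) : \exists r\le t\,(\depp{r}(u,v)\in C_\tau)\}$ and notes that it suffices to check that $\Sigma'$ contains $\Sigma$ and is closed under (A1)--(A7); every consequence of $\Sigma$ then lies in $\Sigma'$, which is the desired conclusion. That is the same induction on derivations you perform, but packaged so that the only work is verifying closure under each rule --- the (A6) case, where one adds the two witnessing indices and uses $r+r'\le t+t'\le 1$, is exactly your (A6)/(A7) commutation in disguise. Your proof-normalization route buys an explicit normal form (all uses of (A7) collapsed into a single final step), at the price of a slightly more delicate termination argument: commuting one occurrence of (A7) past (A3)--(A6) does not decrease the \emph{number} of (A7) occurrences, so the induction should be on something like the total distance of the (A7) occurrences from the root rather than on their count, as you state. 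Your second normalization --- eliminating variables outside $Z_\tau$ by substituting the empty sequence, which is harmless because every schema tolerates empty slots --- makes explicit a point the paper passes over: since $C_\tau$ is by definition the closure only for variables in $Z_\tau$, one must justify that derivations of atoms over $Z_\tau$ can be confined to $Z_\tau$, and the paper's $\Sigma'$ needs the same observation in order to be literally closed under instances of (A1)--(A6) that introduce fresh variables. So your proof is sound and, on this last point, somewhat more careful than the published sketch.
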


\begin{proof}
The implication from right to left is trivial. For the converse it suffices to show that the set $$\Sigma'=\{\depp{t}(u,v) : \exists r\le t(\depp{r}(u,v)\in C_\tau)\}$$ is closed under (A1)-(A7). 
\end{proof}

\begin{definition}
 Suppose $\tau=(\Sigma,\depp{p}(x,y))$. For any  variable $y$ let 
$$d_\tau(y)=\min\{r\in[0,1]:\depp{r}(x,y)\in C_\tau\}.$$
 
\end{definition}

This definition
makes sense because there are only finitely many  $\depp{r}(u,v)$ in $C_\tau$. Note that $d_\tau(x)=0$ by axiom (A1). By Lemma~\ref{3},
$$d_\tau(y)=\min\{r\in[0,1]:\Sigma\vdash\depp{r}(x,y)\}.$$

 

\begin{lemma}
If $\Sigma\vdash\depp{p}(u,v)$, then $d_\tau(v)-d_\tau(u)\le p$.  
\end{lemma}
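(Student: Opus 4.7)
The plan is to obtain the bound by chaining an optimal derived dependency from $x$ to $u$ with the given dependency from $u$ to $v$, using transitivity.

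First, apply Lemma~\ref{3} to the hypothesis $\Sigma\vdash\depp{p}(u,v)$ to produce some $r\le p$ with $\depp{r}(u,v)\in C_\tau$. Next, since $C_\tau$ is finite, the minimum defining $d_\tau(u)$ is attained: there is $\depp{d_\tau(u)}(x,u)\in C_\tau$. The natural move is then to apply transitivity (A6) to these two atoms. When $d_\tau(u)+r\le 1$, (A6) yields $\depp{d_\tau(u)+r}(x,v)\in C_\tau$, and therefore $d_\tau(v)\le d_\tau(u)+r\le d_\tau(u)+p$, which is the desired inequality.

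The one subtlety — and the reason the proof is not a single application of (A6) — is the side condition $p+q\le 1$ on the transitivity rule. If $d_\tau(u)+r>1$, transitivity is unavailable, but in that case axiom (A2) delivers $\depp{1}(x,v)\in C_\tau$ for free, so $d_\tau(v)\le 1\le d_\tau(u)+r\le d_\tau(u)+p$. Thus (A2) functions as a safety net exactly when (A6) fails, and the two cases together exhaust the possibilities. There is no serious obstacle beyond recognizing this case split; if the lemma is read for sequences $u,v$ rather than single variables, one extends $d_\tau$ to sequences by the same $\min$-definition and the argument goes through unchanged.
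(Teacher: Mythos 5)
Your proof is correct and follows essentially the same route as the paper's: chain the optimal derived atom $\depp{d_\tau(u)}(x,u)$ with the hypothesis $\depp{p}(u,v)$ via transitivity (A6) and invoke the minimality in the definition of $d_\tau(v)$. Your explicit case split on the side condition $d_\tau(u)+r\le 1$ of (A6), resolved via (A2) when it fails, handles a detail that the paper's proof silently glosses over, and you handle it correctly.
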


\begin{proof} 
Suppose $d_\tau(u)=r$, $d_\tau(v)=t$, $\Sigma\vdash\depp{r}(x,u)$ ($r$ minimal) and $\Sigma\vdash\depp{t}(x,v)$ ($t$ minimal). 
Now $\Sigma \vdash \depp{r}(x,u)$ and $\Sigma\vdash\depp{p}(u,v)$. Hence $\Sigma\vdash\depp{r+p}(x,v)$. By the minimality of $t$, $t\le r+p$. Hence $t-r\le p$.

\end{proof}

For a given $\Sigma$ there are only finitely many numbers $d_\tau(u)$, $u\in Z_\tau$, because $C_\tau$ is finite. Let $A_\tau$ consist of $p$ and the set of $d_\tau(u)$ such that $u\in Z_\tau$.
Let  $n=1+\max\{\lceil 2/(a-b)\rceil :a,b\in A-\tau, a\ne b\}$.
We define a team $X_\tau$ of size $n$ as follows:
$$X_\tau=\{s_0,\ldots, s_n\},$$
where for $\frac{m}{n}\le d_\tau(u)<\frac{m+1}{n}$ we let
$$s_i(u)=\left\{
\begin{array}{ll}
i, &\mbox{if } {i}\le m \\
m, &\mbox{if }{i}> m\\
\end{array}\right.
$$

\begin{figure}
\begin{center}
 
\begin{tabular}{c|cccc}
     &$x$&\ldots&$u$&\ldots\\
\hline 
$s_0$&0&\ldots& 0  & \ldots      \\
$s_1$&0&\ldots& 1  & \ldots      \\
$s_2$&0&\ldots& 2  & \ldots      \\
\vdots\\
$s_{m}$&0&\ldots& $m$  & \ldots      \\
\vdots\\
$s_{n-1}$&0&\ldots& $m$  & \ldots      \\
\end{tabular}

\end{center}
\caption{The team $X_\tau$} 
\end{figure}

\begin{lemma}\label{fre}
Suppose $X_\tau\models \depp{p}(x,y)$. Then $\Sigma\vdash\depp{p}(x,y)$. 
\end{lemma}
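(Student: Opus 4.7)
The plan is to translate the semantic condition $X_\tau \models \depp{p}(x,y)$ into the inequality $p \ge d_\tau(y)$ via a short counting argument inside $X_\tau$, and then invoke Lemma~\ref{3}.

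First, by axiom (A1) we have $\depp{0}(x,x) \in C_\tau$, so $d_\tau(x) = 0$ and the $x$-column of $X_\tau$ is constant: $s_i(x) = 0$ for every $i$. Consequently, a subset $X_\tau \setminus Y$ satisfies $\dep(x,y)$ iff all remaining assignments agree on $y$. Writing $m = \lfloor n \cdot d_\tau(y)\rfloor$, so that $m/n \le d_\tau(y) < (m+1)/n$, the construction assigns $y$ the value $i$ exactly once for each $0 \le i < m$ and the value $m$ a total of $n - m$ times. The equivalence classes of $y$ are therefore the singletons $\{s_0\},\dots,\{s_{m-1}\}$ together with one large class $\{s_m,\dots,s_{n-1}\}$ of size $n-m$, so the cheapest $Y$ forcing agreement is to discard the singletons, giving minimum $|Y| = m$. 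Hence $X_\tau \models \depp{p}(x,y)$ is equivalent to $p \ge m/n$.

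Next I would upgrade $p \ge m/n$ to $p \ge d_\tau(y)$ using the spacing property of $A_\tau$. Both $p$ and $d_\tau(y)$ lie in $A_\tau$, so either they coincide or, by the choice of $n$, differ by more than $2/n$. If $p < d_\tau(y)$, then combining $p \ge m/n$ with $d_\tau(y) < (m+1)/n$ yields $d_\tau(y) - p < 1/n$, contradicting the separation. Therefore $p \ge d_\tau(y)$, and by the definition of $d_\tau(y)$ there is some $r \le p$ (take $r = d_\tau(y)$) with $\depp{r}(x,y) \in C_\tau$, so Lemma~\ref{3} gives $\Sigma \vdash \depp{p}(x,y)$.

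The delicate point, and the reason for the exact form of $n$, is that the gap between the discrete minimum $m/n$ that $X_\tau$ can exhibit and the threshold $d_\tau(y)$ needed for derivation can be as large as $1/n$, so we must know this gap is strictly smaller than the minimum separation of distinct elements of $A_\tau$; that is precisely why $n$ is calibrated using $\lceil 2/(a-b)\rceil$ rather than $\lceil 1/(a-b)\rceil$. When $y$ is a tuple $y_1\ldots y_k$, the same counting of equivalence classes shows that $X_\tau \models \depp{p}(x,y)$ forces $p \ge m_j/n$, and hence $p \ge d_\tau(y_j)$, for each component $j$; the main obstacle then is to assemble the coordinate-wise atoms in $C_\tau$ into a single atom $\depp{r}(x,y) \in C_\tau$ with $r \le p$ without incurring the sum-of-errors that transitivity (A6) would naively produce, a step that must exploit the fact that the large class $\{s_M,\dots,s_{n-1}\}$ with $M = \max_j m_j$ is shared across all coordinates.
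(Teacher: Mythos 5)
Your core argument is the same as the paper's: the paper runs it as a contradiction ($\Sigma\nvdash\depp{p}(x,y)$ gives $d_\tau(y)>p$, the column structure of $X_\tau$ forces $p\ge m/n$, and the calibration of $n$ forces $p<m/n$), while you run it forwards (minimum deletion is $m$, hence $p\ge m/n$, hence $p\ge d_\tau(y)$ by separation, hence done by Lemma~\ref{3}); the difference is purely organizational. One small correction: the factor $2$ in $n=1+\max\lceil 2/(a-b)\rceil$ is not needed for this lemma --- $a-b\ge 1/(n-1)>1/n$ already follows from $\lceil 1/(a-b)\rceil$ --- it is calibrated for Lemma~\ref{fer}, where one needs an entire grid point $k/n$ to fit strictly between $d_\tau(u)$ and $d_\tau(v)$.

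The issue you raise in your last paragraph about $y$ being a tuple is a genuine gap, but it cuts the opposite way from how you frame it. Because every column of $X_\tau$ conflicts on an \emph{initial segment} of rows and all columns share the common tail class, the minimum deletion for $\dep(x,y_1\ldots y_k)$ in $X_\tau$ is only $M=\max_j m_j$; the shared tail cannot be ``exploited'' to derive $\depp{M/n}(x,y)$, because that atom need not be a semantic consequence of $\Sigma$ at all. Concretely, for $\Sigma=\{\depp{1/3}(x,y_1),\depp{1/3}(x,y_2)\}$ the rules only yield $\depp{r}(x,y_1y_2)$ for $r\ge 2/3$ (via augmentation and transitivity, which sums the errors), and $2/3$ is semantically tight (take $x$ constant on six rows with $y_1=0,0,0,0,1,2$ and $y_2=1,2,0,0,0,0$); yet the constructed $X_\tau$ has identical $y_1$- and $y_2$-columns and therefore satisfies $\depp{p}(x,y_1y_2)$ already for $p$ around $1/3$. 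So for tuple $y$ the lemma can fail for $X_\tau$ as defined, and the team must be modified (e.g.\ by making distinct variables conflict on disjoint blocks of rows) before the statement holds. You should not feel bad about leaving this open: the paper's own proof silently identifies the deletion count for the tuple with $\lfloor n\,d_\tau(y)\rfloor$ computed from the tuple's $d_\tau$-value and has exactly the same gap. For $y$ a single variable your proof is complete and correct.
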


\begin{proof} Suppose $X_\tau\models \depp{p}(x,y)$ but $\Sigma\nvdash\depp{p}(x,y)$.
Now $d_\tau(y)>p$. 
Let $\frac{m}{n}\le d_\tau(y)<\frac{m+1}{n}$. One has to take all the assignments $s_i$, $i\le m-1$, away from $X_\tau$ in order for the remainder to satisfy $\dep(x,y)$. Hence $p\cdot n\ge m$ i.e. $p\ge \frac{m}{n}$. But we have chosen $n$ so that $1/n<d_\tau(y)-p$. Hence $$p<d_\tau(y)-\frac{1}{n}\le \frac{m+1}{n}-\frac{1}{n}=\frac{m}{n},$$ a contradiction.
\end{proof}

\begin{lemma}\label{fer}
Suppose $\Sigma\vdash\depp{q}(u,v)$. Then $X_\tau\models \depp{q}(u,v)$. 
\end{lemma}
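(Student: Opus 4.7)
The plan is to exhibit a subset $Y \subseteq X_\tau$ of size at most $q\cdot n$ such that $X_\tau\setminus Y$ satisfies $\dep(u,v)$, thereby witnessing $X_\tau\models\depp{q}(u,v)$. The only information about $q$ that I will use is that $d_\tau(v)-d_\tau(u)\le q$, supplied by the preceding lemma.

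Let $m_u$ and $m_v$ be the integers determined by $m_u/n\le d_\tau(u)<(m_u+1)/n$ and $m_v/n\le d_\tau(v)<(m_v+1)/n$, as in the construction of $X_\tau$. The $u$-equivalence classes in $X_\tau$ are the singletons $\{s_0\},\ldots,\{s_{m_u-1}\}$ together with the saturated class $C=\{s_{m_u},\ldots,s_{n-1}\}$, on which $u$ is constantly $m_u$; only $C$ can obstruct $\dep(u,v)$. Within $C$ one has $s_i(v)=\min(i,m_v)$. If $m_u\ge m_v$, then $s_i(v)=m_v$ for every $i\ge m_u$, so $\dep(u,v)$ already holds in $X_\tau$ and I can take $Y=\emptyset$. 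Otherwise $m_u<m_v$, and I choose $Y=\{s_{m_u},s_{m_u+1},\ldots,s_{m_v-1}\}$ of size $m_v-m_u$; every element of $C\setminus Y$ then has $v$-value $m_v$, so $\dep(u,v)$ holds after the deletion.

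The remaining step, which I expect to be the crux, is the numerical inequality $m_v-m_u\le q\cdot n$. Plugging the floor estimates $m_v\le d_\tau(v)n$ and $m_u>d_\tau(u)n-1$ into $q\ge d_\tau(v)-d_\tau(u)$ gives only $m_v-m_u<qn+1$, one unit short of the target. Closing this fencepost gap is the main technical obstacle, and it is precisely what the choice $n=1+\max\{\lceil 2/(a-b)\rceil:a,b\in A_\tau,\,a\ne b\}$ was designed to support: distinct elements of $A_\tau$ are forced to be separated by at least $2/(n-1)$, leaving room to absorb the off-by-one by a careful bookkeeping of the fractional parts of $d_\tau(u)n$ and $d_\tau(v)n$, distinguishing the tight case $q=d_\tau(v)-d_\tau(u)$ from the case where $q$ is strictly larger than $d_\tau(v)-d_\tau(u)$.
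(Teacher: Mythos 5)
Your reduction is correct and matches the paper's own argument step for step: the only obstruction to $\dep(u,v)$ in $X_\tau$ is the saturated $u$-class, the optimal deletion set is $\{s_{m_u},\dots,s_{m_v-1}\}$, and everything comes down to the inequality $m_v-m_u\le q\cdot n$. But you stop exactly at the crux. Announcing that the fencepost gap is ``precisely what the choice of $n$ was designed to support'' and can be closed by ``careful bookkeeping of the fractional parts'' is not a proof, and here the optimism is misplaced. The separation built into $n$ guarantees that distinct elements of $A_\tau$ differ by more than $2/n$, which buys you $m_v\ge m_u+2$ (so the two floors genuinely straddle an intermediate integer); it gives you no control over the fractional parts of $d_\tau(u)n$ and $d_\tau(v)n$. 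Writing $m_v-m_u=(d_\tau(v)-d_\tau(u))n+(\{d_\tau(u)n\}-\{d_\tau(v)n\})$, the inequality you need fails whenever $q=d_\tau(v)-d_\tau(u)$ and $\{d_\tau(u)n\}>\{d_\tau(v)n\}$, and nothing in the construction rules this out.

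Concretely: take $\Sigma=\{\depp{0.19}(x,u),\ \depp{0.1}(u,v)\}$ with target atom $\depp{0.29}(x,v)$. Then $d_\tau(u)=0.19$, $d_\tau(v)=0.29$, $A_\tau=\{0,0.19,0.29\}$, so $n=1+\lceil 2/0.1\rceil=21$, $m_u=\lfloor 3.99\rfloor=3$, $m_v=\lfloor 6.09\rfloor=6$; one must delete $3$ assignments while the budget is $qn=2.1$. So the inequality you isolated is simply false for $X_\tau$ as defined, and no bookkeeping will rescue it. (You are in good company: the paper's own proof passes over the same point with the chain $qn\ge(d_\tau(v)-d_\tau(u))n\ge k-d_\tau(u)n\ge k-m$, whose last step needs $d_\tau(u)n\le m=\lfloor d_\tau(u)n\rfloor$ and therefore holds only when $d_\tau(u)n$ is an integer.) The honest conclusion is that this lemma requires a repaired construction --- e.g.\ one arranging that the number of forced deletions for the pair $(u,v)$ is $\lfloor(d_\tau(v)-d_\tau(u))n\rfloor$ rather than $\lfloor d_\tau(v)n\rfloor-\lfloor d_\tau(u)n\rfloor$ --- not sharper estimates; as submitted, your proof is incomplete at its decisive step, and you correctly located a real difficulty that the paper elides.
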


\begin{proof}
We know already $d_\tau(v)-d_\tau(u)\le q$. If $d_\tau(v)\le d_\tau(u)$, then $X_\tau\models\dep(u,v)$, and hence all the more $X_\tau\models \depp{q}(u,v)$. Let us therefore assume $d_\tau(v)>d_\tau(u)$. Since $2/n<d_\tau(v)-d_\tau(u)$, there are $m$ and $k$ such that $$\frac{m}{n}\le d_\tau(u)<\frac{m+1}{n}<\frac{k}{n}\le d_\tau(v)<\frac{k+1}{n}.$$ 

In order to satisfy $\dep(x,y)$ one has to delete $k-m$ assignments from $X_\tau$. But this is fine, as $qn\ge (d_\tau(v)-d_\tau(u))n\ge k-d_\tau(u)n\ge k-m$. 

 

\end{proof}

Lemmas \ref{fre} and \ref{fer} finish the proof of Theorem~\ref{main}.
\medskip

A problematic feature of the approximate dependence atom is that it is  {\bf not local}, that is, the truth of $X\models\depp{p}(x,y)$ may depend on the values of the assignments in $X$ on variables $u$ not occurring in $x$ or $y$.
To see this, consider the team $Y$ of Figure~\ref{nl}. Now $Y$ satisfies $\depp{\frac{1}{3}}(x,y)$. Let $Z$ be the team $Y\restriction xy$.
 Now $Z$ does not satisfy $\depp{\frac{1}{3}}(x,y)$, as Figure~\ref{nl} shows.

\begin{figure}
\begin{center}
 
\begin{tabular}{ccc}
$x$&$y$&$z$\\
\hline
0&0&0\\
0&0&1\\
0&1&1\\
\multicolumn{3}{c}{$Y$}\\
\end{tabular}
\qquad \begin{tabular}{cc}
$x$&$y$\\
\hline
0&0\\
0&1\\
\\
\multicolumn{2}{c}{$Y\restriction xy$}\\
\end{tabular}

\end{center}
\caption{Non-locality\label{nl} of approximation} 
\end{figure}

This problem can be overcome by the introduction of {\bf multi-teams}:
\begin{definition}
A {\em multi-team} is a pair $(X,\tau)$, where $X$ is a set and $\tau$ is a function such that
\begin{enumerate}
\item   $\dom(\tau)=X$,
\item  If $i\in X$, then  $\tau(i)$ is an assignment for one and the same set of variables. This set of variables is denoted by $\dom(X)$. 
 \end{enumerate}

\end{definition}

An ordinary team $X$ can be thought of as the multi-team $(X,\tau)$, where $\tau(i)=i$ for all $i\in X$. When approximate dependence is developed for multi-teams the non-locality phenomenon disappears (see Figure~\ref{nlv}). Moreover, the above Theorem~\ref{main} still holds. 

The canonical example of a team in dependence logic is the set of plays where a player is using a fixed strategy. Such a team satisfies certain dependence atoms reflecting commitments the player has made concerning information he or she is using. If such dependence atoms hold only approximatively, the player is allowed to make a small number of deviations from his or her commitments. Let us suppose the player is committed to $y$ being a function of $x$ during the game. Typically $y$ is a move of this player and $x$ is the information set available for this move.  When we look at a table of plays where the player is following his or her strategy, we may observe that indeed $y$ is functionally determined by $x$ except in a small number of plays. To evaluate the amount of such exceptional plays we can look at the table of all possible plays where the said strategy is used and count the numerical proportion of plays that have to be omitted in order that the promised functional dependence holds.

We have here merely scratched the surface of approximate dependence. When approximate dependence atoms are added to first order logic we can express propositions such as ``the predicate $P$ consists of half of all elements, give or take 5\%" or ``the predicates $P$ and $Q$ have the same number of elements, with a 1 \% margin of error". To preserve locality we have to introduce multi-teams. On the other hand that opens the door to probabilistic teams, teams where every assignment is associated with a probability with which a randomly chosen element of the team is that very assignment. We will not pursue this idea further here.

\begin{figure}
\begin{center}
 
\begin{tabular}{ccc}
$x$&$y$&$z$\\
\hline
0&0&0\\
0&0&1\\
0&1&1\\
\multicolumn{3}{c}{$Y$}\\
\end{tabular}
\qquad \begin{tabular}{cc}
$x$&$y$\\
\hline
0&0\\
0&0\\
0&1\\
\multicolumn{2}{c}{$Y\restriction xy$}\\
\end{tabular}

\end{center}
\caption{Multi-teams\label{nlv}} 
\end{figure}


\begin{thebibliography}{1}

\bibitem{brvv}
Radim B\v{e}lohl\'{a}vek and Vilém Vychodil.
\newblock Data tables with similarity relations: Functional dependencies,
  complete rules and non-redundant bases.
\newblock In Mong Lee, Kian-Lee Tan, and Vilas Wuwongse, editors, {\em Database
  Systems for Advanced Applications}, volume 3882 of {\em Lecture Notes in
  Computer Science}, pages 644--658. Springer Berlin Heidelberg, 2006.



\bibitem{Kivinen1995129}
Jyrki Kivinen and Heikki Mannila.
\newblock Approximate inference of functional dependencies from relations.
\newblock {\em Theoretical Computer Science}, 149(1):129 -- 149, 1995.
\newblock Fourth International Conference on Database Theory (ICDT '92).

\bibitem{MR2132917}
Rohit Parikh and Jouko V{\"a}{\"a}n{\"a}nen.
\newblock Finite information logic.
\newblock {\em Ann. Pure Appl. Logic}, 134(1):83--93, 2005.

\bibitem{MR2351449}
Jouko V{\"a}{\"a}n{\"a}nen.
\newblock {\em Dependence logic}, volume~70 of {\em London Mathematical Society
  Student Texts}.
\newblock Cambridge University Press, Cambridge, 2007.

\end{thebibliography}
\def\Dbar{\leavevmode\lower.6ex\hbox to 0pt{\hskip-.23ex \accent"16\hss}D}
  \def\cprime{$'$}

\end{document}